\def\thickness{0.7pt}
\tikzstyle{dot}=[circle, draw=black, fill=black!25, inner sep=.4ex, line width=\thickness, node on layer=foreground]
\tikzstyle{blackdot}=[dot, fill=black!50]
\tikzstyle{blackdot}=[dot, fill=gray!40!white]
\tikzstyle{whitedot}=[dot, fill=white]
\tikzstyle{reddot}=[dot, fill=red]
\tikzstyle{greendot}=[dot, fill=green]
    \gdef\node@@on@layer{%
  \setbox\tikz@tempbox=\hbox\bgroup\pgfonlayer{#1}\unhbox\tikz@tempbox\endpgfonlayer\egroup}
\def\node@on@layer{\aftergroup\node@@on@layer}
\newlength\morphismheight
\newlength\minimummorphismwidth
\newlength\stateheight
\newlength\minimumstatewidth
\newlength\connectheight
\tikzset{width/.initial=\minimummorphismwidth}
\newif\ifwedge\pgfkeys{/tikz/wedge/.is if=wedge}
\tikzset{wedge}
\newif\ifvflip\pgfkeys{/tikz/vflip/.is if=vflip}
\newif\ifhflip\pgfkeys{/tikz/hflip/.is if=hflip}
\newif\ifhvflip\pgfkeys{/tikz/hvflip/.is if=hvflip}
\newif\ifconnectnw\pgfkeys{/tikz/connect nw/.is if=connectnw}
\newif\ifconnectne\pgfkeys{/tikz/connect ne/.is if=connectne}
\newif\ifconnectsw\pgfkeys{/tikz/connect sw/.is if=connectsw}
\newif\ifconnectse\pgfkeys{/tikz/connect se/.is if=connectse}
\newif\ifconnectn\pgfkeys{/tikz/connect n/.is if=connectn}
\newif\ifconnects\pgfkeys{/tikz/connect s/.is if=connects}
\newif\ifconnectnwf\pgfkeys{/tikz/connect nw >/.is if=connectnwf}
\newif\ifconnectnef\pgfkeys{/tikz/connect ne >/.is if=connectnef}
\newif\ifconnectswf\pgfkeys{/tikz/connect sw >/.is if=connectswf}
\newif\ifconnectsef\pgfkeys{/tikz/connect se >/.is if=connectsef}
\newif\ifconnectnf\pgfkeys{/tikz/connect n >/.is if=connectnf}
\newif\ifconnectsf\pgfkeys{/tikz/connect s >/.is if=connectsf}
\newif\ifconnectnwr\pgfkeys{/tikz/connect nw </.is if=connectnwr}
\newif\ifconnectner\pgfkeys{/tikz/connect ne </.is if=connectner}
\newif\ifconnectswr\pgfkeys{/tikz/connect sw </.is if=connectswr}
\newif\ifconnectser\pgfkeys{/tikz/connect se </.is if=connectser}
\newif\ifconnectnr\pgfkeys{/tikz/connect n </.is if=connectnr}
\newif\ifconnectsr\pgfkeys{/tikz/connect s </.is if=connectsr}
\tikzset{keylengthnw/.initial=\connectheight}
\tikzset{keylengthn/.initial =\connectheight}
\tikzset{keylengthne/.initial=\connectheight}
\tikzset{keylengthsw/.initial=\connectheight}
\tikzset{keylengths/.initial =\connectheight}
\tikzset{keylengthse/.initial=\connectheight}
\tikzset{connect nw length/.style={connect nw=true, keylengthnw={#1}}}
\tikzset{connect n length/.style ={connect n =true, keylengthn ={#1}}}
\tikzset{connect ne length/.style={connect ne=true, keylengthne={#1}}}
\tikzset{connect sw length/.style={connect sw=true, keylengthsw={#1}}}
\tikzset{connect s length/.style ={connect s =true, keylengths ={#1}}}
\tikzset{connect se length/.style={connect se=true, keylengthse={#1}}}
\tikzset{connect nw < length/.style={connect nw <=true, keylengthnw={#1}}}
\tikzset{connect n < length/.style ={connect n <=true,  keylengthn ={#1}}}
\tikzset{connect ne < length/.style={connect ne <=true, keylengthne={#1}}}
\tikzset{connect sw < length/.style={connect sw <=true, keylengthnw={#1}}}
\tikzset{connect s < length/.style ={connect s <=true,  keylengths ={#1}}}
\tikzset{connect se < length/.style={connect se <=true, keylengthse={#1}}}
\tikzset{connect nw > length/.style={connect nw >=true, keylengthnw={#1}}}
\tikzset{connect n > length/.style ={connect n >=true,  keylengthn ={#1}}}
\tikzset{connect ne > length/.style={connect ne >=true, keylengthne={#1}}}
\tikzset{connect sw > length/.style={connect sw >=true, keylengthsw={#1}}}
\tikzset{connect s > length/.style ={connect s >=true,  keylengths ={#1}}}
\tikzset{connect se > length/.style={connect se >=true, keylengthse={#1}}}
\theoremstyle{plain}
\newtheorem{thm}{Theorem}[section]
\newtheorem{lem}[thm]{Lemma}
\newcommand\varitem[1]{\bfem[\textbf{A\arabic{enumi}\rlap{$#1$}.}]%
  \edef\@currentlabel{A\arabic{enumi}{$#1$}}}
\crefname{lemma}{Lemma}{Lemmas}
\newcommand{\icirc}{\mathbin{\mathpalette\make@small\oplus}}
\newcommand{\smallotimes}{\mathbin{\mathpalette\make@small\otimes}}
\newcommand{\make@small}[2]{%
  \vcenter{\hbox{%
    \scalebox{0.6}{$\m@th#1#2$}%
  }}%
}
\theoremstyle{definition}
\newtheorem{defn}{Definition}[section]
\theoremstyle{remark}
\DeclareFontFamily{U}{skulls}{}
\DeclareFontShape{U}{skulls}{m}{n}{ <-> skull }{}
\DeclareFontFamily{U}{min}{}
\DeclareFontShape{U}{min}{m}{n}{<-> udmj30}{}
\begin{document}

{\centering\scshape\Large\textsc{An Axiomatic Approach to The Multiverse of Sets} \par}
{\centering\scshape\textsc{Alec Rhea} \par}

\begin{abstract}
Recent work in set theory indicates that there are many different notions of 'set', each captured by a different collection of axioms, as proposed by J. Hamkins in [Ham11]. In this paper we strive to give one class theory that allows for a simultaneous consideration of all set theoretical universes and the relationships between them, eliminating the need for recourse 'outside the theory' when carrying out constructions like forcing etc. We also explore multiversal category theory, showing that we are finally free of questions about 'largeness' at each stage of the categorification process when working in this theory -- the category of categories we consider for a given universe contains all large categories in that universe without taking recourse to a larger universe. We leverage this newfound freedom to define a category ${\bf Force}$ whose objects are universes and whose arrows are forcing extensions, a $2$-category $\mathcal{V}\mathfrak{erse}$ whose objects are the categories of sets in each universe and whose component categories are given by functor categories between these categories of sets, and a tricategory $\mathbb{Cat}$ whose objects are  the $2$-categories of categories in each universe and whose component bicategories are given by pseudofunctor, pseudonatural transformation and modification bicategories between these $2$-categories of categories in each universe.  \\
\end{abstract}

\tableofcontents

\chapter{Theory of the Multiverse}

Here we lay out the primitive notions under consideration, the language we will express them in, and the axioms these primitive notions obey. \\ 

\section{Primitive Notions}

The language is the first order language of class theory. The primitives are {\bf classes}, denoted with capital letters from the end of the alphabet $X,Y,Z,\dots$ together will the primitive relation of {\bf class membership}, denoted $\in$. For two classes $X$ and $Y$ we either have that $X$ {\bf is a member of $Y$}, denoted $X\in Y$, or not, denoted $X\notin Y$. We have a specified class called {\bf the multiverse}, denoted by an individual constant $\mathcal{M}$, and members $V\in\mathcal{M}$ of the multiverse are called {\bf universes}. A {\bf set} is a class $X$ that is a member of some universe, that is such that there exists some $V$ with $X\in V\in\mathcal{M}$. If we say that a class $X$ is a {\bf $V$-set} or a {\bf $V$-class}, we mean that $X\in V\in\mathcal{M}$ or $X\subseteq V\in\mathcal{M}$, respectively. \\

\section{Axioms}

The axioms are as follows. \\

\begin{itemize}
\item {\bf A1 - Class Extensionality}. Two classes are equal iff they have the same elements. $$\forall X\forall Y\big(X=Y\iff\forall Z(Z\in X\iff Z\in Y)\big).$$
\item {\bf A2 - Class Separation}. For any class $Z$ and any predicate $\phi(\cdot,Y)$, where $Y$ stands for finitely many class variables, there exists a class $A$ whose members are exactly those members of $Z$ satisfying $\phi(\cdot, Y)$. $$\forall Z\forall\phi(\cdot,Y)\exists A\forall X\Big(X\in A\iff\big(X\in Z\wedge \phi(X,Y)\big)\Big).$$ We denote the class $A$ guaranteed by this axiom together with a class $Z$ and predicate $\phi(\cdot,Y)$ by $$\{X\in Z:\phi(X,Y)\}.$$
\item {\bf A3 - Class Pairing}. For any two classes $X,Y$ there exists a class $Z$ whose members are $X$ and $Y$. $$\forall X\forall Y\exists Z\forall A(A\in Z\iff A=X\vee A=Y).$$ We denote the class $Z$ guaranteed by this axiom and two classes $X,Y$ by $\{X,Y\}.$ \\
\end{itemize}

\begin{defn}[{\it Singleton, Ordered Pair}]
We define the {\bf singleton} $$\{X\}=\{X,X\},$$ and the {\bf ordered pair} $$(X,Y)=\{\{X\},\{X,Y\}\}.$$ \\
\end{defn}

\begin{itemize}
\item {\bf A4 - Class Cartesian Product}. For any two classes $X,Y$ there exists a class $Z$ whose members are ordered pairs from $X$ and $Y$. $$\forall X\forall Y\exists Z\forall A\Big(A\in Z\iff\exists X'\in X\exists Y'\in Y\big(A=(X',Y')\big)\Big).$$ We refer to the class $Z$ guaranteed by this axiom and two classes $X,Y$ as the {\bf Cartesian product} of $X$ and $Y$, denoted $$X\times Y.$$ For each class $X$ and natural number $n$ we define $X^n$ to be the $n$-fold Cartesian product of $X$ with itself. \\
\end{itemize}

\begin{defn}[{\it Containment, Subclass, Relation}]
We say that a class $X$ {\bf is contained in} a class $Y$, written $X\subseteq Y$, iff all members of $X$ are also members of $Y$. $$X\subseteq Y\iff\forall Z(Z\in X\implies Z\in Y).$$ In this situation we may also say that $X$ is a {\bf subclass} of $Y$. A {\bf relation} is a class that is a subclass of some Cartesian product. $$R\ \text{is a relation}\iff \exists X\exists Y(R\subseteq X\times Y).$$ For classes $A,B$ we define a {\bf relation from $A$ to $B$} to be a subclass of $A\times B$. We define a {\bf relation on $A$} to be a subclass of $A\times A$. \\
\end{defn}

\begin{itemize}
\item {\bf A5 - Universes are Models}. Each universe $X\in\mathcal{M}$ consists of an ordered pair $(V,\in_V)$ such that $\in_V$ is a relation on $V$, called {\bf membership in $V$}. $$\forall X\Big(X\in\mathcal{M}\implies\exists(V,\in_V)\big(X=(V,\in_V)\wedge\in_V\subseteq V\times V\big)\Big).$$ We will abuse notation and denote a universe $(V,\in_V)\in\mathcal{M}$ by its first coordinate $V$. We say that a universe $(V,\in_V)$ {\bf models} a predicate $\phi$, written $$V\models\phi,$$ iff relativizing all quantifiers in $\phi$ to $V$ and replacing all instances of $\in$ with $\in_V$ yields a true sentence. We will say that a universe $V$ is {\bf standard} iff membership in $V$ is actual membership; that is, $$V\ \text{is standard}\iff\forall X\in V\forall Y\in V(X\in_V Y\iff X\in Y).$$ We say that a universe $V$ is {\bf transitive} iff members of members of $V$ are also members of $V$. $$V\ \text{is transitive}\iff\forall X\forall Y(X\in Y\in V\implies X\in V).$$ We say that a universe $V$ is {\bf complete} iff it is transitive and subclasses of $V$-sets are also $V$-sets. $$V\ \text{is complete}\iff V\ \text{is transitive}\wedge\forall X\forall Y(Y\subseteq X\in V\implies Y\in V).$$
\item {\bf A6 - Internal Empty Set}. Every universe thinks it has an empty set. $$\forall V\in\mathcal{M}\big(V\models\exists z\forall x(x\notin z)\big).$$
\item {\bf A7 - Well-Behaved Universe Existence}. A standard transitive universe exists. $$\exists V\in\mathcal{M}(V\ \text{is standard and transitive}).$$ \\
\end{itemize}

\begin{defn}[{\it Elementary Submodel}]
Let $V$ and $V'$ be universes. We will say that $V$ is an {\bf elementary submodel} of $V'$, written $$V\preceq V',$$ iff the following three conditions hold:
\begin{enumerate}
\item $V\subseteq V'$.
\item $\in_V=\in_{V'}\upharpoonleft\upharpoonright V$.
\item For any collection $\{x_i\}_{i<n}\subseteq V$ and any $n$-ary predicate $\phi$, we have that $$V'\models\phi(x_0,\dots,x_{n-1})\iff V\models\phi(x_0,\dots,x_{n-1}).$$ \\
\end{enumerate}
\end{defn}

Refer to the theory given by the language and primitives above together with axioms {\bf A1$-$A7} as $T_\emptyset$. For a fixed universe $V\in\mathcal{M}$, we will say that a predicate $\phi$ is {\bf safe above $V$} iff no universes containing $V$ occur in $\phi$, including $V$, and also the multiverse does not occur in it. \\

\begin{itemize}
\item {\bf A8 - Truth Closure}. For any universe $V$ whose existence is not instantiated by this axiom and for any predicate $\phi$ safe above $V$, if it is consistent with $T_\emptyset$ that $V$ models $\phi$ then there exists a universe $V+\phi$ such that $V+\phi\models\phi$ and $V\preceq V+\phi$. For every universe $V'$ instantiated by $n$ applications of this axiom at universes $\{V+\sum_{i<m}\phi_i\}_{m<n}$ and statements $\{\phi_i\}_{i<n}$, so $V'=V+\sum_{i<n}\phi_i$, let $T_\emptyset^{V'}$ be the theory given by $T_\emptyset$ plus individual constants $\{V+\sum_{i<m}\phi_i\}_{m\leq n}$ and axioms stating that $V'=V+\sum_{i<n}\phi_i$, that $V+\sum_{i<m}\phi_i\in\mathcal{M}$ for all $m\leq n$, that $V+\sum_{i<m}\phi_i\models\bigwedge_{i<m}\phi_i$ for all $m\leq n$, and that $V+\sum_{i<m}\phi_i\preceq V+\sum_{i<m+1}\phi_{i}$ for all $m<n$. For any statement $\phi'$ such that $V'$ modeling $\phi'$ is consistent with $T_\emptyset^{V'}$, there exists a universe $V'+\phi'$ such that $V'+\phi'\models\phi'$ and $V'\preceq V'+\phi'$.  \\
\end{itemize}

Note that for any statement $\phi$ independent of $T_\emptyset$, we will have universes $V+\phi$ and $V+\neg\phi$. This axiom allows us to obtain universes with basically any axioms we could want by observing that almost all statements we could write down about the standard transitive universe asserted to exist in {\bf A7} are consistent with $T_\emptyset$, but to show this we need models of $T_\emptyset$ where $\mathcal{M}$ is barren and models of $T_\emptyset$ where $\mathcal{M}$ is robust. To this extent, we would like some standard set theories to work with when constructing models of $T_\emptyset$.  \\

\begin{itemize}
\item {\bf A9 - Standard Theories}. For any well-established set theory $T$ (for example all variants of $ZFC$ with/without large cardinals, NF, KP, etc.) let $T_\wedge$ denote the conjunction of all axioms of $T$; then there exists a standard complete universe $V_T$ such that $$V_T\models T_\wedge.$$ \\
\end{itemize}

For example, recall that $ZFC$ is axiomatized as follows:

\subsubsection{Axioms of ZFC}
\begin{itemize}
\item {\bf ZFC 1}. $\forall x\forall y\big(x=y\iff\forall z(z\in x\iff z\in y)\big)$.
\item {\bf ZFC 2}. $\forall x\exists y(y\in x\wedge y\cap x=\emptyset).$
\item {\bf ZFC 3}. $\forall\phi(\cdot,y)\forall z\exists a\forall x\big(x\in a\iff x\in z\wedge\phi(x,y)\big)$.
\item {\bf ZFC 4}. $\forall x\forall y\exists z\forall a(a\in z\iff a=x\vee a=y)$.
\item {\bf ZFC 5}. $\forall x\forall y\exists z\forall a(a\in z\iff a\in x\vee a\in y)$.
\item {\bf ZFC 6}. $\forall \phi(\cdot,\cdot)\forall a\Big(\forall x\in a\exists y\big(\phi(x,y)\big)\implies\exists b\forall c\big(c\in b\iff\exists x\in a\big(\phi(x,c)\big)\big)\Big)$.
\item {\bf ZFC 7}. $\exists x\big(\emptyset\in x\wedge\forall y(y\in x\implies\mathcal{S}y\in x)\big)$.
\item {\bf ZFC 8}. $\forall x\exists y\forall z(z\subseteq x\iff z\in y)$.
\item {\bf ZFC 9}. $\forall x\Big(\emptyset\notin x\implies\exists f\big(f\ \text{is a function}\wedge dmn(f)=X\wedge rng(f)\subseteq\bigcup X\wedge \forall y\in x(f(y)\in y)\big)\Big)$. \\
\end{itemize}

{\bf A9} then tells us that there exists a standard complete universe $V_{ZFC}\in\mathcal{M}$ such that $$V_{ZFC}\models\bigwedge_{0<i<10}{\bf ZFC\ i},$$ so we can carry out all the standard set-theoretical constructions we would like to in $V_{ZFC}$. Obviously, for any large cardinal axiom $\phi$ we will have that $V_{ZFC+\phi}$ and $V_{ZFC}+\phi$ (where the first universe is given by {\bf A9} directly and the second universe used {\bf A9} to yield $V_{ZFC}$ and truth closure together with the consistency of $\phi$ over $ZFC$) both have exactly the same conditions imposed on them. Also worth noting is that because $ZF$ is incomplete (as all reasonably strong set theories are) there is no model of $ZF$ which models {\it exactly} the $ZF$-provable sentences, since the existence of such a model for a theory $T$ is equivalent to $T$ being complete. Put another way, any model $V_{ZF}$ will necessarily have sentences $\phi$ such that $V_{ZF}\models\phi$ but $ZF\nvdash\phi$ and $ZF\nvdash\neg\phi$, so these models will always 'model more' than can be proved from their underlying theories -- note that the additional statements they model in some sense pertain to the 'undetermined parts of the universe' for whatever given set theory we're considering (statements guaranteed by Gödel's incompleteness theorems for any set theory capable of defining the natural numbers and Gödel coding), since any reasonably strong set theory is incapable of describing the entire universe of sets it corresponds to [Göd31]. \\

\subsection{Additional Axioms for Classes}

We were careful above to only add axioms about classes as necessary to formalize the 'aether around the multiverse' (i.e. classes that aren't sets or universes or the multiverse itself) enough to enable a general discussion of modeling and elementary submodels. Here, we add two additional axioms for classes that aren't necessary for presenting any universes or constructing anything inside any specific universe, but rather for building class structure 'on top' of poorly structured universes of sets and the multiverse itself. \\

\begin{itemize}
\item {\bf A10 - Class Union}. For any class $X$, there exists a class $Y$ whose members are precisely the members of members of $X$. $$\forall X\exists Y\forall Z\big(Z\in Y\iff\exists A\in X(Z\in A)\big).$$ We will generally write the class $Y$ guaranteed by this axiom together with a class $X$ as $$\bigcup X$$ and refer to it as the {\bf union of $X$}. \\
\end{itemize}

\begin{defn}[{\it Binary Union}]
For classes $X,Y$ we define the {\bf binary union of $X$ and $Y$}, denoted $$X\cup Y,$$ by $$X\cup Y=\bigcup\{X,Y\}.$$
\end{defn}

\begin{lem}
$\forall X\forall Y\forall Z(Z\in X\cup Y\iff Z\in X\vee Z\in Y).$
\end{lem}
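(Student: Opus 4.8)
The plan is to reduce the statement to the defining equation $X\cup Y=\bigcup\{X,Y\}$ and then chain together the Class Union axiom {\bf A10} with the Class Pairing axiom {\bf A3}, finishing with an elementary first-order manipulation of an existential quantifier ranging over a two-element condition. First I would fix arbitrary classes $X$, $Y$, $Z$ and unfold the definition of binary union, so that the goal becomes $Z\in\bigcup\{X,Y\}\iff Z\in X\vee Z\in Y$.

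Next I would apply {\bf A10} to the class $\{X,Y\}$, which yields $Z\in\bigcup\{X,Y\}\iff\exists A\in\{X,Y\}(Z\in A)$; written without the bounded-quantifier abbreviation this is $\exists A\big(A\in\{X,Y\}\wedge Z\in A\big)$. Then I would invoke {\bf A3}, which characterizes membership in the pair as $A\in\{X,Y\}\iff A=X\vee A=Y$. Substituting this characterization gives the intermediate equivalence $Z\in X\cup Y\iff\exists A\big((A=X\vee A=Y)\wedge Z\in A\big)$.

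The remaining work is the purely logical equivalence $\exists A\big((A=X\vee A=Y)\wedge Z\in A\big)\iff(Z\in X\vee Z\in Y)$, which I would prove by a short argument in both directions. For the forward direction, a witness $A$ must satisfy either $A=X$, whence $Z\in X$, or $A=Y$, whence $Z\in Y$; for the converse, if $Z\in X$ I take $A=X$ as the witness, and if $Z\in Y$ I take $A=Y$. Combining the chain of equivalences discharges the goal, and since $X$, $Y$, $Z$ were arbitrary the universally quantified statement follows.

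I do not expect any genuine obstacle here: the result is essentially a bookkeeping unwinding of definitions and axioms. The only point requiring the slightest care is the final step, where the existential quantifier over the disjunctive condition must be distributed correctly — one should resist the temptation to treat $\exists A$ as commuting naively and instead supply the explicit witnesses in the converse direction, relying on reflexivity of equality to instantiate $A$. Everything else is a direct appeal to {\bf A10}, {\bf A3}, and the definition of $\cup$.
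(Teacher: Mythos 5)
Your proof is correct and follows essentially the same route as the paper's: unfold $X\cup Y=\bigcup\{X,Y\}$, apply \textbf{A10} to the pair, use \textbf{A3} to characterize membership in $\{X,Y\}$, and finish with the logical equivalence. If anything, your explicit handling of the existential quantifier and witnesses is slightly more careful than the paper's final displayed step, which elides the quantifier.
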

\begin{proof}
$$Z\in X\cup Y\iff\exists A\in\{X,Y\}(Z\in A),$$ $$A\in\{X,Y\}\iff A=X\vee A=Y,$$ $$Z\in A\wedge (A=X\vee A=Y)\iff Z\in X\vee Z\in Y.$$ \\
\end{proof}

\begin{itemize}
\item {\bf A11 - Powerclass}. For any class $X$ there exists a class $Y$ whose members are precisely the subclasses of $X$. $$\forall X\exists Y\forall Z(Z\subseteq X\iff Z\in Y).$$ We will generally denote the class $Y$ guaranteed by this axiom and a class $X$ by $$\mathcal{P}(X)$$ and refer to it as the {\bf powerclass of $X$}. \\
\end{itemize}

These axioms together will enable us to discuss things like topologies on $\mathcal{M}$, and these together with the next axiom will allow for the definition of a bicategory $\mathcal{V}\mathfrak{erse}$ whose objects are the categories of sets in each universe and whose component categories are given by functor categories between categories of sets. \\

\subsection{Structure Axioms}

The final axioms we consider are motivated by a desire to obtain a 'nicely structured' multiverse -- we achieve this goal by imposing minimum requirements on what a universe must have internally (in addition to having an internal empty set by {\bf A6}). \\

\begin{itemize}
\item {\bf A12 - Structured Multiverse}. All universes model {\bf A1}$-${\bf A4} and {\bf A10}. $$\forall V\in\mathcal{M}(V\models{\bf A1\wedge A2\wedge A3\wedge A4\wedge A10}).$$ We denote by {\bf A12*} the axiom consisting of {\bf A12} augmented with the requirement that all universes model {\bf A11}. \\
\end{itemize}

In particular, this ensures we have a notion of internal functions so that we can define categories, an internal category of sets, and an internal $2$-category of categories in a given universe. {\bf A12*} will further ensure that the categories of sets in each universe admit power objects. \\

\subsection{Consistency Strength}

For any theory $T$, denote by ${\sf Con}(T)$ the assertion that $T$ is consistent; that is, $${\sf Con}(T)\iff\nexists\phi(T\vdash\phi\wedge T\vdash\neg\phi).$$ Recall that $${\sf Con}(T)\iff\exists X(X\models T),$$ where $X$ is a class equipped with functions and relations for all function and relation symbols in $T$. {\bf A9} immediately and trivially yields that the theory of the multiverse proves the consistency of all known set theories with all known large cardinal hypotheses, since we have models of all of them. This includes models of mutually exclusive set theories like $ZFC$ and $ZF+AD$, and pretty much any other set theory we could write down -- as a consequence of modeling all these theories and thereby proving their consistency, the theory of the multiverse is greater in consistency strength than any known set theory. We could increase consistency strength further by assuming the consistency of this theory or trying to find some 'class large cardinal' axioms (or something like this), but for now this misses the point of formalizing the multiverse -- we would like a universal 'metatheory' in which to carry out all set-theoretic arguments involving multiple set theories and multiple universes of sets/models of those theories, and the theory of the multiverse as presented above serves this metatheoretic purpose well.  \\

\chapter{Set Theory in the Multiverse}

In this section, we briefly outline one easy way to use the theory of the multiverse as a universal metatheory for all set-theoretic constructions involving multiple set theories and universes/models of those theories. \\

\section{Models of $T_\emptyset$}

Basically the only new set-theoretical axiom introduced in this formalism is {\bf A8}, truth closure, and this axiom makes reference to the theory $T_\emptyset$ given by the primitive notions and constant symbols in the theory of the multiverse together with axioms {\bf A1}$-${\bf A7}. Specifically, invoking truth closure begins with taking the standard transitive universe $V$ asserted to exist in {\bf A6} and observing that $V$ modeling almost any statement we can write down is independent of $T_\emptyset$, yielding a plethora of new universes that are elementary extensions of $V$ where all these statements hold. To do this, we would naturally like a very weak model $W$ of $T_\emptyset$ where $\mathcal{M}^W$ is a singleton containing only $V^W$ and $V^W$ itself is as barren as possible, and another rich model $R$ where $\mathcal{M}^R$ is highly populated and $V^R$ is very robust.  \\

\begin{lem}[{\it $T_\emptyset$ has a weak model}]
Let $V_\omega$ denote the set of hereditarily finite sets, define $$V^{V_\omega}=\{\emptyset,1,2,3,\{3\}\},$$ and define $\mathcal{M}^{V_\omega}=\{V^{V_\omega}\}.$ Then $$(V_\omega,\in,\mathcal{M}^{V_\omega})\models T_\emptyset$$ and $V^{V_\omega}$ doesn't model separation, pairing, Cartesian product, replacement, union or powerset. \\
\end{lem}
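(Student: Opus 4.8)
The plan is to regard the displayed structure as a model of the class theory in which the class variables range over the hereditarily finite sets $V_\omega$, the predicate $\in$ is interpreted as genuine set membership, and the constant $\mathcal{M}$ is interpreted as $\mathcal{M}^{V_\omega}=\{V^{V_\omega}\}$, where---following the abuse of notation fixed after {\bf A5}---the single universe is the ordered pair whose first coordinate is $V^{V_\omega}=\{\emptyset,1,2,3,\{3\}\}$ and whose second coordinate is genuine membership restricted to $V^{V_\omega}$. To establish $(V_\omega,\in,\mathcal{M}^{V_\omega})\models T_\emptyset$ I would check {\bf A1}--{\bf A7} one at a time. {\bf A1} is inherited from the extensionality of genuine membership. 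For {\bf A2}--{\bf A4} the key point is that $V_\omega$ is closed under the operations involved: a subclass separated from a finite element of $V_\omega$ is again a finite hereditarily finite set (uniformly in the defining predicate, which is what the schema {\bf A2} needs), and the unordered pair and Cartesian product of two elements of $V_\omega$ are hereditarily finite. {\bf A5} holds since the unique element of $\mathcal{M}^{V_\omega}$ is by construction a pair $(V,\in_V)$ with $\in_V\subseteq V\times V$. {\bf A6} holds with witness $\emptyset$, which is internally empty, and {\bf A7} is witnessed by $V^{V_\omega}$ itself: its membership is a restriction of genuine membership, so it is standard, and checking that the members of $1,2,3$ and of $\{3\}$ all lie back in $V^{V_\omega}$ shows it is transitive.

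For the negative assertions I would first note that, being standard and transitive, $V^{V_\omega}$ interprets $\in$ and $\subseteq$ absolutely, so each failure reduces to producing a set that the relevant internal axiom demands but that is missing from $\{\emptyset,1,2,3,\{3\}\}$. Separation fails at $z=3$ with the predicate ``$x$ is nonempty'', whose extension $\{1,2\}$ is absent; replacement fails in the same way, sending the two members of $2$ to $1$ and $2$ and demanding the image $\{1,2\}$. Pairing fails already for a singleton, since no element of $V^{V_\omega}$ has internal element-set $\{1\}$. Powerset fails at $3$: its internal subsets are exactly $\emptyset,1,2,3$, so the power object would have to be $\{\emptyset,1,2,3\}\notin V^{V_\omega}$. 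For union I would point out that it is the \emph{binary} union of {\bf ZFC 5} that fails---$1\cup\{3\}=\{\emptyset,3\}\notin V^{V_\omega}$---and remark explicitly that the general union operation is in fact internally closed here (each $\bigcup x$ for $x\in V^{V_\omega}$ lands back inside), so ``union'' must be read in the binary sense.

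The step I expect to be the genuine obstacle is Cartesian product, precisely because it is entangled with the failure of pairing. No element of $V^{V_\omega}$ is a Kuratowski pair: a pair $\{\{a\},\{a,b\}\}$ has only nonempty members and so cannot equal $\emptyset$ or any of $1,2,3$ (each empty or containing $\emptyset$), nor the singleton $\{3\}$, whose member $3$ is not itself a singleton. Hence no internal ordered pair of elements of $V^{V_\omega}$ lies in $V^{V_\omega}$, and the relativisation of {\bf A4} threatens to be \emph{vacuously} satisfied: for every $x,y$ the internal product has no members and so collapses to $\emptyset$, which is present. The point that must be settled with care is therefore the reading of ``models Cartesian product''---it is genuinely false that $V^{V_\omega}$ is closed under the external product (already $1\times 1=\{\{1\}\}\notin V^{V_\omega}$), whereas the relativised existential {\bf A4} is vacuously true. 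I would make this distinction explicit and record the failure of product (and, in parallel, of union) as non-closure under the operation rather than as failure of the relativised sentence, since this is the only delicate point in the verification.
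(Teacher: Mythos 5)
The paper states this lemma with no proof at all, so there is no argument of the paper's to compare yours against; your verification supplies what the paper omits, and it is correct. Your positive check of \textbf{A1}--\textbf{A7} is the natural one: extensionality is inherited, closure of $V_\omega$ under subsets, unordered pairs and products gives \textbf{A2}--\textbf{A4}, and $V^{V_\omega}$ is standard and transitive, giving \textbf{A7}. Your reading of the single universe as the ordered pair $(V^{V_\omega},\in_V)$, with $\in_V$ genuine membership restricted to $V^{V_\omega}$, is not merely permissible but forced: the bare five-element set is not a Kuratowski pair, so \textbf{A5} would fail outright under the literal reading of $\mathcal{M}^{V_\omega}=\{V^{V_\omega}\}$. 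Your counterexamples for the internal failures are all correct: $\{1,2\}$ (separation at $3$ and replacement on $2$), $\{1\}$ (pairing), $\{\emptyset,1,2,3\}=4$ (powerset at $3$), and $1\cup\{3\}=\{\emptyset,3\}$ (binary union).

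The two interpretive points you isolate are genuine defects in the lemma as stated, not artifacts of your approach, and flagging them is the most valuable part of your write-up. First, $V^{V_\omega}$ really is closed under unary union ($\bigcup\emptyset=\bigcup 1=\emptyset$, $\bigcup 2=1$, $\bigcup 3=2$, $\bigcup\{3\}=3$), so ``union'' in the lemma can only mean \textbf{ZFC 5}, the binary form. Second, since no element of $V^{V_\omega}$ realizes a Kuratowski pair internally, the relativization of \textbf{A4} to $V^{V_\omega}$ is satisfied, so ``doesn't model Cartesian product'' is literally false as a relativized sentence and survives only in your non-closure sense ($1\times 1=\{\{1\}\}\notin V^{V_\omega}$). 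One refinement to your argument here: whether the internal product is always $\emptyset$ depends on how the defined notation $A=(X',Y')$ is unfolded. If the singleton and doubleton are existentially quantified internal witnesses (the unfolding closest to the paper's definitional convention), then indeed no internal pairs exist and $z=\emptyset$ always witnesses \textbf{A4}. But if one unfolds via universally quantified membership conditions, then $\emptyset$ \emph{vacuously} counts as an internal pair $(1,1)$ whenever the witnessing sets $\{1\}$ and $\{1,1\}$ are absent from $V^{V_\omega}$, so for instance the internal product of $2$ with itself is $\{\emptyset\}$, witnessed by $z=1$ rather than $z=\emptyset$. Either way the relativized sentence holds and your conclusion stands; you should just not assert unconditionally that the internal product ``collapses to $\emptyset$.''
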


\begin{lem}[{\it $T_\emptyset$ has a rich model}]
Let $\lambda$ be a Mahlo cardinal with $\kappa'<\lambda$ uncountable and inaccessible, denote by $V_\lambda$ the cumulative hierarchy below $\lambda$, define $$V^{V_\lambda}=V_{\kappa'},$$ and define $\mathcal{M}^{V_\lambda}=\{V_\kappa:\kappa\ \text{is inaccessible}\wedge\kappa<\lambda\}$. Then $$(V_\lambda,\in,\mathcal{M}^{V_\lambda})\models T_\emptyset$$ and $V^{V_\lambda}$ models all of $ZF$. \\
\end{lem}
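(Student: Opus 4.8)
The plan is to verify the seven axioms of $T_\emptyset$ directly for the structure $(V_\lambda,\in,\mathcal{M}^{V_\lambda})$, reading $\mathcal{M}^{V_\lambda}$ as the distinguished interpretation of the multiverse constant and the members of the universes as the sets of the model. First I would record the two background facts that drive everything: since $\lambda$ is Mahlo the inaccessible cardinals below $\lambda$ are stationary, hence cofinal in $\lambda$, so $\bigcup\{V_\kappa:\kappa<\lambda\ \text{inaccessible}\}=V_\lambda$ and $\mathcal{M}^{V_\lambda}$ is richly populated; and for each inaccessible $\kappa<\lambda$ one has the classical fact that $V_\kappa\models ZFC$. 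Under the abuse of notation fixed in {\bf A5}, each member of $\mathcal{M}^{V_\lambda}$ is really the ordered pair $(V_\kappa,\in\upharpoonright V_\kappa)$; since $\kappa<\lambda$ and $\lambda$ is a limit, a short rank computation shows that $(V_\kappa,\in\upharpoonright V_\kappa)\in V_\lambda$, so every universe is a genuine element of the set-domain while only their collection $\mathcal{M}^{V_\lambda}$ sits one rank-level above it.

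For {\bf A1}--{\bf A4} I would use that $\lambda$ inaccessible makes $(V_\lambda,\in)$ a transitive model of $ZFC$. Extensionality ({\bf A1}) is immediate because $\in$ is genuine membership, and separation ({\bf A2}), pairing ({\bf A3}) and the Cartesian product ({\bf A4}) of elements of $V_\lambda$ are furnished directly by $V_\lambda\models ZFC$. The content then reduces to checking that these three class-forming axioms survive the adjunction of the single extra class $\mathcal{M}^{V_\lambda}$, which I treat as the main obstacle below.

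For the multiverse axioms, {\bf A5} holds by the identification just made: every member of $\mathcal{M}^{V_\lambda}$ is an ordered pair $(V_\kappa,\in\upharpoonright V_\kappa)$ whose second coordinate is a relation on its first. Axiom {\bf A6} is immediate since each $V_\kappa\models ZFC$ and in particular $V_\kappa\models\exists z\forall x(x\notin z)$. Axiom {\bf A7} is witnessed by any $V_\kappa$, for instance $V_{\kappa'}$: membership in it is actual membership, so it is standard, and it is transitive since each $V_\kappa$ is a transitive set. This is precisely where Mahlo-ness pays off, as it guarantees that $\mathcal{M}^{V_\lambda}$ is nonempty (indeed cofinal), so {\bf A7} has a witness and the model is genuinely ``rich''. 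Finally, for the second assertion I would invoke the inaccessibility of $\kappa'$ to conclude $V^{V_\lambda}=V_{\kappa'}\models ZFC$, hence a fortiori $V_{\kappa'}\models ZF$; the only axiom requiring real work is Replacement, where I would use the regularity of $\kappa'$ to bound the rank of the image of any function whose domain lies in $V_{\kappa'}$, together with strong-limitness for Power Set and $\omega<\kappa'$ for Infinity.

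The hard part is the bookkeeping around $\mathcal{M}^{V_\lambda}$, the unique class of the model reaching rank $\lambda$, in the class-existence axioms {\bf A2}--{\bf A4}. The key observation that makes this go through is that every member of $\mathcal{M}^{V_\lambda}$ has rank $<\lambda$: separating a subfamily of $\mathcal{M}^{V_\lambda}$ therefore produces another family of such universes, and pairing or multiplying $\mathcal{M}^{V_\lambda}$ with existing classes yields only further proper classes built over $V_\lambda$, never new sets. Consequently the sets of the model remain exactly $V_\lambda$, the membership structure on $V_\lambda$ is undisturbed, and the multiverse axioms {\bf A5}--{\bf A7} — which speak only about the universes, all of rank $<\lambda$ — are verified inside $V_\lambda$ itself. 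I expect this closure check to be the one place where the argument must be carried out with care, the remaining axioms being either immediate or instances of the classical inaccessible-implies-$ZFC$ computation.
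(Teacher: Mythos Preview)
The paper states this lemma without proof, so there is no argument of the paper's to compare against; your verification of {\bf A1}--{\bf A7} directly, using the inaccessibility of $\lambda$ for the class axioms and the classical fact $V_\kappa\models ZFC$ for the universe axioms, is the natural route and is essentially correct.

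You have also put your finger on the one genuine subtlety, which the paper's notation glosses over: since the inaccessibles below a Mahlo cardinal are cofinal in $\lambda$, the class $\mathcal{M}^{V_\lambda}$ has rank exactly $\lambda$ and hence is \emph{not} an element of $V_\lambda$. Thus the structure literally written as $(V_\lambda,\in,\mathcal{M}^{V_\lambda})$ does not have its multiverse constant inside its domain. Your remedy---adjoin $\mathcal{M}^{V_\lambda}$ as a single extra class and then close under separation, pairing, and Cartesian product---is the right one, and your observation that every member of $\mathcal{M}^{V_\lambda}$ still has rank $<\lambda$ is exactly what makes the closure harmless for the set part of the model. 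To make this fully rigorous you would want to say explicitly that the actual domain is the smallest collection containing $V_\lambda\cup\{\mathcal{M}^{V_\lambda}\}$ and closed under those three operations (a definable subclass of $V_{\lambda+\omega}$), and then check that {\bf A2} with the new proper-class parameters still lands back in that collection; but you have correctly flagged this as the only place requiring care, and the rest of your sketch goes through as written.
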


These arguments together show that the standard transitive universe $V$ asserted to exist in {\bf A6} modeling any of the axioms of $ZF$ (besides extensionality) is independent of $T_\emptyset$, thus we have a universe modeling all of these axioms by truth closure -- what theory did we define the above models in, though? We could assume we were working in $ZFC$ plus sufficient large cardinal axioms, but that would defeat the whole point of the multiverse as a metatheory for set theory -- we would once again be stepping 'outside the theory' to construct models of it, requiring metatheoretic considerations outside the multiverse (even if these considerations could then be reproduced inside the multiverse). This is why we added {\bf A9} -- we can now assert that we were simply working in $V_{ZFC+\phi}$ for a sufficiently strong large cardinal axiom $\phi$ when constructing the above models, keeping all considerations 'in house'. Another rich model of $T_\emptyset$ is given by the collection of countable computably saturated models of $ZFC$ as defined in [GH11]. \\

\section{Mathematics in the Multiverse}

As we will see in the next section, the theory of the multiverse allows for a robust discussion of what would otherwise be 'proper classes' in each universe (and 'super-classes', 'super-duper classes', etc.) by allowing them to exist as classes outside that universe. It often occurs that algebraic structures we would like to discuss fail to have an underlying set in a given set theory, instead possessing an underlying class (or superclass etc.). The theory of the multiverse allows us to treat such algebraic structures exactly as we would intuitively like to, leveraging the fact that they all exist as well-behaved classes 'above' the universes we wish they lived in. \\

\begin{defn}[{\it Good Classes}]
For a fixed universe $V$, we say that a class $X$ is {\bf $V$-good} iff there exists a natural number $n$ such that $X\in\mathcal{P}^n(V)$, {\bf $V$-good of rank $n_0$} iff $n_0$ is the smallest natural number such that $X\in\mathcal{P}^{n_0}(V)$, {\bf good of rank $n_0$} iff there exists a universe $W$ such that $X$ is $W$-good of rank $n_0$, and {\bf good} iff there exists a universe $W$ such that $X$ is $W$-good. We say that {\bf X} is {\bf pseudo-good} iff there exists a natural number $n$ such that $X\in\mathcal{P}^n(\bigcup\mathcal{M})$, and {\bf pseudo-good of rank $n_0$} iff $n_0$ is the smallest natural number such that $X\in\mathcal{P}^{n_0}(\bigcup\mathcal{M})$. We say that $X$ is {\bf esoteric} iff there exists a natural number $n$ such that $X\in\mathcal{P}^n(\mathcal{M})$, and {\bf esoteric of rank $n_0$} iff $n_0$ is the smallest natural number such that $X\in\mathcal{P}^{n_0}(\mathcal{M})$. We say that $X$ is {\bf strange} iff it is not good, pseudo-good, or esoteric. \\
\end{defn}

As discussed in [Mul01] and originally defined in [LV61], for sufficiently structured universes $V$ (for example $V_{ZFC}$) we automatically have that all good classes exist, and that the Cartesian product of good classes is good, etc., all {\it without} the axioms for classes guaranteeing their existence -- the axioms we have for classes are thusly superfluous for talking about $V$-good classes over 'sufficiently nice' universes $V$. We added these axioms anyways because we wanted to be able to manipulate all of $\mathcal{M}$ and powerclasses thereof, and because poorly structured universes may otherwise have poorly behaved good classes above them. All of 'standard large mathematics' takes place in good classes -- note that all sets are good classes of rank $0$, $V$-sets are $V$-good classes of rank $0$, each universe $V$ and all the $V$-ordinals $O_n^V$ and all the $V$-surreals $N_0^V$ are $V$-good classes of rank $1$, topologies on $V$ and $O_n^V$ and $N_0^V$ are $V$-good classes of rank $2$, uniformities on $V$ and $O_n^V$ and $N_0^V$ are $V$-good classes of rank $3$, so on and so forth. Many constructions we care about involving 'all the somethings' in a given universe $V$ exist as $V$-good classes of varying rank over that universe, including categories of $V$-large $V$-categories, $V$-functor categories between $V$-large $V$-categories, so on and so forth. Further, we have that all universes are esoteric classes of rank $0$, the multiverse is an esoteric class of rank $1$, topologies on the multiverse are esoteric classes of rank $2$, uniformities on the multiverse are esoteric classes of rank $3$, and the categories constructed in the next section are all tuples of good or pseudo-good classes of varying rank. Pseudo-good classes that aren't good don't play a large role in any area of mathematics outside this paper that the author is aware of, but we singled them out because they play a large role in chapter 3 and are 'easily understood' as unions of good classes -- note that the multiverse is pseudo-good of rank $2$ in addition to being esoteric of rank $1$ since $X\subseteq\mathcal{P}(\bigcup X)\implies X\in\mathcal{P}(\mathcal{P}(\bigcup X))$ for all classes $X$, so we will generally have that the pseudo-good rank of a class is one higher than the esoteric rank whenever esoteric rank exists. Wether or not any strange classes exist is unclear, but if so they certainly deserve the moniker 'strange' -- they never appear in or above any set theoretical universe irrespective of the axioms used to define that universe, and they never appear as collections of universes or collections of collections of universes, etc., and they never appear as unions of any of these classes or their elements -- one wonders where they would appear.  \\

\chapter{Multiversal Category Theory}

Category theory has been intimately tied to multiversal questions almost since its inception, initially in the guise of which categories are 'small' -- it became apparent during the era of MacLane/Grothendieck et. al. that category theory seemed to require multiple successive 'universes', each containing the preceding one and respecting its notion of 'truth', in order to talk about 'all the groups/categories/etc.' at each universal stage. The theory of the multiverse handles this at a primitive level, taking multiple universes for granted and allowing us to consider the categories of all sets/groups/categories etc. at each stage by building them 'on top' of the universe they live in, using classes. It would also be possible to use truth closure to expand the universe we're in and grab whatever additional sets we need to talk the way we'd like to, as is essentially done when using Grothendieck universes etc., however we won't need to 'cheat' in this fashion by virtue of the fact that all classes occurring in some stage of the iterated powerclass of a sufficiently 'nice' universe are themselves 'nice'; these provide the structure we need 'on top of a universe' to talk about 'all the blorps' in a given universe. In what follows, if we say 'category', '$2$-category', etc. without a '$V$-' prefix, we mean the definition using classes and external membership (not sets/membership in any specific universe). \\

\section{The Forcing Category}

Up first, we define a category whose objects are universes and whose arrows are forcing extensions between universes. \\

\begin{defn}[{\it Forcing Category}]
We define a category $${\bf Force}$$ called the {\bf forcing category} as follows:
\begin{enumerate}
\item The objects of {\bf Force} are universes. That is, $${\bf Ob}_{\bf Force}=\mathcal{M}.$$
\item The arrows $V\to W$ of {\bf Force} are forcing extensions from $V$ to $W$.
\item Identities are given by trivial forcings.
\item Composition is given by composition of forcing extensions. \\
\end{enumerate}
\end{defn}

Slice categories ${\bf Force}/V$ thusly give all universes that we can force from into $V$, and coslice categories $V/{\bf Force}$ give all universes that we can force into from $V$. There are many other interesting things to be said about {\bf Force}, but for now we move on to higher categorical considerations. \\

\section{The $2$-Category of Universes}

Next, we define a $2$-category whose objects are the categories of sets in each universe and whose component categories are given by functor categories between these categories of sets. Recall that a relation $R$ from $X$ to $Y$ is {\bf entire} iff all members of $X$ occur at least once as a first coordinate in $R$, {\bf functional} iff each member of $X$ occurs at most once as a first coordinate in $R$, and a {\bf function from $X$ to $Y$} is a relation from $X$ to $Y$ which is entire and functional. We denote a function $f:X\to Y$ in class builder notation as $$f=\langle f(X')\in Y:X'\in X\rangle.$$ Up first, we must define what we mean by the 'category of sets in a universe'. In what follows, we assume {\bf A12}.  \\

\begin{defn}[{\it Category of Sets in a Universe}]
Let $V$ be a universe. We define a category $${\bf Set}_V$$ called the {\bf category of $V$-sets} as follows:
\begin{enumerate}
\item The objects of ${\bf Set}_V$ are $V$-sets. That is, $${\bf Ob}_{{\bf Set}_V}=V.$$
\item The arrows $f:X\to Y$ in ${\bf Set}_V$ are functions from $X$ to $Y$ in $V$. That is, $${\bf Hom}_{{\bf Set}_V}(X,Y)=\{f\in V:V\models f\ \text{is a function from}\ X\ \text{to}\ Y\},$$ and we define $${\bf Hom}_{{\bf Set}_V}=\big\{{\bf Hom}_{{\bf Set}_V}(X,Y)\in\mathcal{P}(V):(X,Y)\in V\times V\big\}.$$
\item Identity arrows are given by identity functions. That is, for all $X\in V$ we define $$1_X=\langle x\in_V X:x\in_V X\rangle,$$ and we further define $${\bf 1}^V=\{1_X\in V:X\in V\}.$$
\item Composition is given by internal composition of functions. That is, for $V$-functions $g:X\to Y$, $f:Y\to Z$, we define $$f\circ_{_V}g=\langle f(g(x))\in_VZ:x\in_VX\rangle,$$ for each triplet of objects $X,Y,Z\in {\bf Ob}_{{\bf Set}_V}$ we define $$\circ_{XYZ}=\langle f\circ_{_V} g\in{\bf Hom}_{{\bf Set}_V}(X,Z):(f,g)\in{\bf Hom}_{{\bf Set}_V}(Y,Z)\times_V{\bf Hom}_{{\bf Set}_V}(X,Y)\rangle,$$ and finally we define $$\circ^V=\big\{\circ_{XYZ}\in\mathcal{P}\big({\bf Hom}_{{\bf Set}_V}(Y,Z)\times{\bf Hom}_{{\bf Set}_V}(X,Y)\times{\bf Hom}_{{\bf Set}_V}(X,Z)\big):(X,Y,Z)\in V\times V\times V\big\}.$$ \\
\end{enumerate}
\end{defn}

Note that ${\bf Set}_V$ can formally be considered as a tuple $$\big({\bf Ob}_{{\bf Set}_V},{\bf Hom}_{{\bf Set}_V},{\bf 1}^V,\circ^V\big)\in\mathcal{M}\times\mathcal{P}(\mathcal{P}(\bigcup\mathcal{M}))\times\mathcal{P}(\bigcup\mathcal{M})\times\mathcal{P}(\mathcal{P}(\bigcup\mathcal{M}\times\bigcup\mathcal{M}\times\bigcup\mathcal{M}))$$ since any function $f:X\to Y$ is just a member of $\mathcal{P}(X\times Y)$ satisfying certain conditions and, defining $${\bf Hom}_{{\bf Set}_V}(X,Y,Z)={\bf Hom}_{{\bf Set}_V}(Y,Z)\times{\bf Hom}_{{\bf Set}_V}(X,Y)$$ for brevity, we have that $${\bf Ob}_{{\bf Set}_V}=V\in\mathcal{M},$$ $${\bf Hom}_{{\bf Set}_V}\in\mathcal{P}(\mathcal{P}(V))\subseteq\mathcal{P}(\mathcal{P}(\bigcup\mathcal{M})),$$ $${\bf 1}^V\in\mathcal{P}(V)\subseteq\mathcal{P}(\bigcup\mathcal{M}),$$ $$\circ^V\in\mathcal{P}(\mathcal{P}({\bf Hom}_{{\bf Set}_V}(X,Y,Z)\times{\bf Hom}_{{\bf Set}_V}(X,Z)))\subseteq\mathcal{P}(\mathcal{P}(V\times V\times V))\subseteq\mathcal{P}(\mathcal{P}(\bigcup\mathcal{M}\times\bigcup\mathcal{M}\times\bigcup\mathcal{M})).$$ \\ Note the heavy usage of {\bf A10} and {\bf A11}; this formalization allows us to use class separation legitimately in the next definition. \\

\begin{defn}[{\it $2$-Category of Universes}]
We define a $2$-category $$\mathcal{V}\mathfrak{erse}$$ called the {\bf category of universes} as follows:
\begin{enumerate}
\item The objects of $\mathcal{V}\mathfrak{erse}$ are categories of sets in universes. That is, $${\bf Ob_{\mathcal{V}\mathfrak{erse}}}=\big\{{\bf Set}_V\in\mathcal{M}\times\mathcal{P}(\mathcal{P}(\bigcup\mathcal{M}))\times\mathcal{P}(\bigcup\mathcal{M})\times\mathcal{P}(\mathcal{P}(\bigcup\mathcal{M}\times\bigcup\mathcal{M}\times\bigcup\mathcal{M})):V\in\mathcal{M}\big\}.$$
\item For each pair of objects ${\bf Set}_V,{\bf Set}_W\in{\bf Ob}_{\mathcal{V}\mathfrak{erse}}$, we define the component category $$\mathcal{V}\mathfrak{erse}({\bf Set}_V,{\bf Set}_W)$$ to be the category of functors from ${\bf Set}_V$ to ${\bf Set}_W$. 
\item Identities are given by identity functors. 
\item Composition is given by composition of functors and the Godement product of natural transformations. \\
\end{enumerate}
\end{defn}

This $2$-category allows us to talk about the categories of sets in each universe; note that without {\bf A12} 'poorly structured' universes (i.e. those with very few axioms imposed on them besides {\bf A6}) wouldn't necessarily even support the standard notions of relation, function, etc., so {\bf A12} or another equivalent remedy is required for this part of the theory to go through cleanly. We could get around this by defining categories of sets only for universes supporting sufficient structure to allow their definition, but requiring that all universes actually model minimal set-theoretic axioms seems more canonical. {\bf A12*} would add internal powersets and consequently force ${\bf Set}_V$ to have power objects for all $V\in\mathcal{M}$, thusly yielding categories of sets that are topoi (note that topoi need not internally have replacement). \\

\section{The $3$-Category of $2$-Categories of $V$-Categories}

Finally, we define a tricategory whose objects are the $2$-categories of categories in each universe and whose component $2$-categories are bicategories of pseudofunctors, pseudonatural transformations and modifications between these $2$-categories of categories in each universe -- in what follows, we assume {\bf A12*}. Up first, we define the notion of a $V$-category. \\

\begin{defn}[{\it $V$-Category}]
Let $V$ be a universe. A {\bf $V$-category}, denoted $$\mathcal{C}_V,$$ consists of $V$-sets and $V$-functions such that they model the axioms of a category. That is, let $${\sf is\ a\ category}$$ denote the concatenation of the usual axioms for a category; a $V$-category then consists of the following data:
\begin{enumerate}
\item A $V$-class of objects, denoted ${\bf Ob}_{\mathcal{C}_V}.$
\item A $V$-class of arrows, denoted ${\bf Hom}_{\mathcal{C}_V}.$
\item Domain and codomain $V$-functions $dom_V,cod_V:{\bf Hom}_{\mathcal{C}(V)}\rightrightarrows{\bf Ob}_{\mathcal{C}_V}$.
\item An identity selecting $V$-function ${\bf 1_V}:{\bf Ob}_{\mathcal{C}_V}\to{\bf Hom}_{\mathcal{C}_V}$.
\item A composition $V$-function $\circ_V:{\bf Com}_{\mathcal{C}_V}\to{\bf Hom}_{\mathcal{C}_V}$.
\end{enumerate}
This data is required to satisfy $$({\bf Ob}_V,{\bf Hom}_V,dom_V,cod_V,{\bf 1}_V,\circ_V)\models{\sf is\ a\ category}.$$ Further, for all $X,Y\in{\bf Ob}_{\mathcal{C}(V)}$ we define $${\bf Hom}_{\mathcal{C}_V}(X,Y)=dom_V^{-1}(X)\cap cod_V^{-1}(Y).$$ We say that a $V$-category $\mathcal{C}_V$ is {\bf $V$-small} iff ${\bf Hom}_{\mathcal{C}_V}\in V$, {\bf $V$-locally small} iff ${\bf Hom}_{\mathcal{C}_V}(X,Y)\in V$ for all $X,Y\in{\bf Ob}_{\mathcal{C}_V}$, and {\bf $V$-large} otherwise.  \\
\end{defn}

So a $V$-category is just a category internal to $V$ -- we define $V$-functors and $V$-natural transformations similarly, relativizing all data to $V$ and replacing all instances of $\in$ with $\in_V$ -- note that (by internal extensionality) internal membership gives rise to a notion of 'internal equality' $=_V$ which may differ from external equality $=$. \\

\begin{defn}[{\it $V$-Functor Category}]
Let $V$ be a universe, with $\mathcal{C}_V$ and $\mathcal{D}_V$ $V$-categories. We define a category $$\mathcal{D}_V^{\mathcal{C}_V}$$ called the {\bf $V$-functor category from $\mathcal{C}_V$ to $\mathcal{D}_V$} as follows:
\begin{enumerate}
\item The objects of $\mathcal{D}_V^{\mathcal{C}_V}$ are $V$-functors $F_V:\mathcal{C}_V\to\mathcal{D}_V$. That is, $${\bf Ob}_{\mathcal{D}_V^{\mathcal{C}_V}}=\{F_V\in\mathcal{P}(V\times V)^2:F_V\ \text{is a $V$-functor from $\mathcal{C}_V$ to $\mathcal{D}_V$}\}.$$
\item The arrows $\alpha_V:F_V\Rightarrow G_V$ are given by $V$-natural transformations between $V$-functors. That is, $${\bf Hom}_{\mathcal{D}_V^{\mathcal{C}_V}}(F_V,G_V)=\{\alpha_V\in\mathcal{P}(V):\alpha_V\ \text{is a $V$-natural transformation from $F_V$ to $G_V$}\}.$$ 
\item Identities are given by internal identity natural transformations.
\item Composition is given by pointwise internal composition of internal natural transformations. \\
\end{enumerate}
\end{defn}

In this setting we can collect together all $V$-categories into one $2$-category without worrying about size, by virtue of the fact that we aren't gathering them into a '$V$-$2$-category' -- we're gathering them into an {\it external} $2$-category built out of {\it internal} $1$-categorical data. Note that a $V$-category $\mathcal{C}_V$ can formally be considered as a tuple $$({\bf Ob}_{\mathcal{C}_V},{\bf Hom}_{\mathcal{C}_V},dom_V,cod_V,{\bf 1}_V,\circ_V)\in\mathcal{P}(V)^2\times\mathcal{P}(V\times V)^4$$ since $${\bf Ob}_{\mathcal{C}_V},{\bf Hom}_{\mathcal{C}_V}\subseteq V\implies{\bf Ob}_{\mathcal{C}_V},{\bf Hom}_{\mathcal{C}_V}\in\mathcal{P}(V),$$ $$dom_V,cod_V\in\mathcal{P}({\bf Hom}_{\mathcal{C}_V}\times{\bf Ob}_{\mathcal{C}_V})\subseteq\mathcal{P}(V\times V),$$ $${\bf 1}_V\in\mathcal{P}({\bf Ob}_{\mathcal{C}_V}\times{\bf Hom}_{\mathcal{C}(V)})\subseteq\mathcal{P}(V\times V),$$ $$\circ_V\in\mathcal{P}({\bf Com}_{\mathcal{C}_V}\times{\bf Hom}_{\mathcal{C}_V})\subseteq\mathcal{P}(V\times V).$$ Further, by defining $${\bf 1}_{\mathcal{D}_V^{\mathcal{C}_V}}=\{1_{F_V}\in\mathcal{P}(V):F_V:\mathcal{C}_V\to\mathcal{D}_V\ \text{is a $V$-functor}\},$$ $$\circ_{\mathcal{D}_V^{\mathcal{C}_V}}=\{\circ_{XYZ}\in\mathcal{P}(V^3):(X,Y,Z)\in V^3\},$$ $V$-functor categories can be viewed as tuples $$\big({\bf Ob}_{\mathcal{D}_V^{\mathcal{C}_V}},{\bf Hom}_{\mathcal{D}_V^{\mathcal{C}_V}},{\bf 1}_{\mathcal{D}_V^{\mathcal{C}_V}},\circ_{\mathcal{D}_V^{\mathcal{C}_V}}\big)\in\mathcal{P}(\mathcal{P}(V\times V)^2)\times\mathcal{P}(\mathcal{P}(V))^2\times\mathcal{P}(\mathcal{P}(V^3)).$$  This allows us to use class separation legitimately in the next definition. \\

\begin{defn}[{\it $2$-Category of $V$-Categories}]
Let $V$ be a universe. We define a $2$-category $$\mathfrak{Cat}_V$$ called the {\bf $2$-category of $V$-categories} as follows:
\begin{enumerate}
\item The objects of $\mathfrak{Cat}_V$ are $V$-categories. That is, $${\bf Ob}_{\mathfrak{Cat}_V}=\{\mathcal{C}_V\in\mathcal{P}(V)^2\times\mathcal{P}(V\times V)^4:\mathcal{C}_V\ \text{is a $V$-category}\}.$$
\item The component categories $\mathfrak{Cat}_V(\mathcal{C}_V,\mathcal{D}_V)$ are given by $V$-functor categories $\mathcal{D}_V^{\mathcal{C}_V}$. That is, $${\bf Hom}_{\mathfrak{Cat}_V}=\{\mathcal{D}_V^{\mathcal{C}_V}\in\mathcal{P}(\mathcal{P}(V\times V)^2)\times\mathcal{P}(\mathcal{P}(V))^2\times\mathcal{P}(\mathcal{P}(V^3)):\mathcal{C}_V\ \text{and}\ \mathcal{D}_V\ \text{are $V$-categories}\}.$$
\item Identities are given by internal identity $V$-functors and internal identity $V$-natural transformations.
\item Composition is given by internal composition of $V$-functors and the internal Godement product of $V$-natural transformations. \\
\end{enumerate}
\end{defn}

Armed with $2$-categories of all (possibly large) $V$-categories for each universe $V$, we are prepared to tricategorify them. Note that by viewing the identities in a component category $\mathcal{D}_V^{\mathcal{C}_V}$ as identity selecting functors ${\sf 1}_{\mathcal{C}_V,\mathcal{D}_V}:\mathbb{1}_V\to\mathcal{D}_V^{\mathcal{C}_V}$ out of the free internal terminal category on a singleton in $V$ and recalling that composition is a functor $\Gamma_{\mathcal{C}_V,\mathcal{D}_V,\mathcal{E}_V}:\mathcal{E}_V^{\mathcal{D}_V}\times\mathcal{D}_V^{\mathcal{C}_V}\to\mathcal{E}_V^{\mathcal{C}_V}$, we can define $${\bf 1}_{\mathfrak{Cat}_V}=\{{\sf 1}_{\mathcal{C}_V,\mathcal{D}_V}\in\mathcal{P}(V\times V)^2:\mathcal{C}_V\ \text{and}\ \mathcal{D}_V\ \text{are $V$-categories}\},$$ $$\Gamma_{\mathfrak{Cat}_V}=\{\Gamma_{\mathcal{C}_V,\mathcal{D}_V,\mathcal{E}_V}\in\mathcal{P}(V^3)\times\mathcal{P}(V^3):\mathcal{C}_V,\mathcal{D}_V,\ \text{and}\ \mathcal{E}_V\ \text{are $V$-categories}\},$$ and recalling that $V\subseteq\bigcup\mathcal{M}$ for all universes $V$ allows us to view $\mathfrak{Cat}_V$ for an arbitrary universe $V$ as a tuple $$({\bf Ob}_{\mathfrak{Cat}_V},{\bf Hom}_{\mathfrak{Cat}_V},{\bf 1}_{\mathfrak{Cat}_V},\circ_{\mathfrak{Cat}_V})$$ in $$\mathfrak{X}=\mathcal{P}\Big(\mathcal{P}(\bigcup\mathcal{M})^2\times\mathcal{P}(\bigcup\mathcal{M}\times \bigcup\mathcal{M})^4\Big)\times\mathcal{P}\Big(\mathcal{P}(\mathcal{P}(\bigcup\mathcal{M}\times \bigcup\mathcal{M})^2)\times\mathcal{P}(\mathcal{P}(\bigcup\mathcal{M}))^2\times\mathcal{P}(\mathcal{P}(\bigcup\mathcal{M}^3))\Big)$$ $$\times\mathcal{P}\Big(\mathcal{P}(\bigcup\mathcal{M}\times \bigcup\mathcal{M})^2\Big)\times\mathcal{P}\Big(\mathcal{P}(\bigcup\mathcal{M}^3)\times\mathcal{P}(\bigcup\mathcal{M}^3)\Big),$$ once again allowing us to use class separation legitimately in the next definition. \\

\begin{defn}[{\it Tricategory of $2$-Categories of $V$-Categories}]
We define a tricategory $$\mathbb{Cat}$$ called the {\bf multiversal tricategory} as follows:
\begin{enumerate}
\item The objects of $\mathbb{Cat}$ are $2$-categories of $V$-categories for arbitrary universes $V$. That is, $${\bf Ob}_\mathbb{Cat}=\{\mathfrak{Cat}_V\in\mathfrak{X}:V\in\mathcal{M}\}.$$
\item The component bicategories are the bicategories of pseudofunctors, pseudonatural transformations and modifications between these bicategories of $V$-categories.
\item Identities are given by identity pseudofunctors, pseudonatural transformations and modifications.
\item Composition is given by composition of pseudofunctors, $2$-Godement products of pseudonatural transformations and horizontal composition of modifications. \\
\end{enumerate}
\end{defn}

We could obviously consider variations at differing levels of strictness for the component categories; choosing the 'pseudo' level is in no way necessary, and the stricter versions may be better behaved. \\

\section{The $n$-Category of $n-1$-Categories of$\dots$of $V$-Categories}

It is obviously possible to 'iterate' the above construction, considering the $4$-category of tricategories of bicategories of categories in each universe together with appropriately defined component $3$-functor categories, so on and so forth, at varying levels of 'internalization' -- we leave these topics for another paper. \\

\chapter{References}

\begin{itemize}
\item $\text{[Ham11]}$ - J. D. Hamkins. "The set-theoretic multiverse". Review of Symbolic Logic 5:416-449 (2012). arXiv:1108.4223 [math.LO]. \href{https://doi.org/10.1017/S1755020311000359}{https://doi.org/10.1017/S1755020311000359}
\item $\text{[GH11]}$ - V. Gitman, J. D. Hamkins. "A natural model of the multiverse axioms". arXiv:1104.4450 [math.LO]. \href{https://doi.org/10.48550/arXiv.1104.4450}{https://doi.org/10.48550/arXiv.1104.4450}
\item $\text{[Mul01]}$ - F. A. Muller. "Sets, Classes and Categories". British Journal for the Philosophy of Science 52 (2001) 539-573. \href{https://doi.org/10.1093/bjps/52.3.539}{https://doi.org/10.1093/bjps/52.3.539}
\item $\text{[LV61]}$ - A. Lévy, R. Vaught. "Principles of partial reflection in the set theories of Zermelo and Ackermann". Pacific J. Math. 11(3): 1045-1062 (1961). \href{https://doi.org/10.2140/pjm.1961.11.1045}{https://doi.org/10.2140/pjm.1961.11.1045}
\item $\text{[Ack56]}$ - W. Ackermann. "Zur Axiomatik der Mengenlehre". Math. Ann. 131, 336–345 (1956). \href{https://doi.org/10.1007/BF01350103}{https://doi.org/10.1007/BF01350103}
\item $\text{[Rein70]}$ - W. Reinhardt. "Ackermann's set theory equals ZF". Annals of Mathematical Logic, Volume 2, Issue 2, October 1970, Pages 189-249. \href{https://doi.org/10.1016/0003-4843(70)90011-2}{https://doi.org/10.1016/0003-4843(70)90011-2}
\item $\text{[Göd31]}$ - K. Gödel. "Über formal unentscheidbare Sätze der Principia Mathematica und verwandter Systeme, I", Monatshefte für Mathematik und Physik, v. 38 n. 1, pp. 173–198. \href{https://doi.org/10.1007/BF01700692}{https://doi.org/10.1007/BF01700692}
\end{itemize}

\end{document}